\numberwithin{equation}{section}
  \theoremstyle{plain}
 \newtheorem{theorem}{Theorem}
\newtheorem{proposition}{Proposition}
 \newtheorem{lemma}{Lemma}
 \theoremstyle{remark}
\theoremstyle{definition}
\newcommand{\tr}{\text{tr}}
\newcommand{\ddbar}{\sqrt{-1}\partial\bar\partial}
\title{$L^\infty$ estimates for K\"ahler-Ricci flow on K\"ahler-Einstein Fano manifolds: a new derivation}
\author{Wangjian Jian and Yalong Shi}
\begin{document}
\maketitle

\begin{abstract}
Assuming Perelman's estimates, we give a new proof of uniform $L^\infty$ estimate along normalized K\"ahler-Ricci flow on Fano manifolds with K\"ahler-Einstein metrics, using Chen-Cheng's auxiliary Monge-Amp\`ere equation and the Alexandrov-Bakelman-Pucci maximum principle. This proof does not use pluripotential theory.
\end{abstract}
\tableofcontents

\section{Introduction}
Let $(X^n,\omega_g)$ be a compact K\"ahler manifold of complex dimension $n$. Assume $c_1(X)>0$, i.e. $X$ is Fano and assume $[\omega_g]=2\pi c_1(X)$. We consider the following normalized K\"ahler-Ricci flow: 
\begin{equation}\label{eqn:nKRF}
    \frac{\partial}{\partial t}\omega(t)=\omega(t)-Ric(\omega(t)).
\end{equation}
Choose a smooth volume form $\Omega$ satisfying $\omega_0=-\ddbar\log\Omega$ and $\int_X\Omega=\int_X\omega_0^n$, and set $\omega(t)=\omega_0+\ddbar\varphi$, then \eqref{eqn:nKRF} is equivalent to 
\begin{equation}\label{eqn:potentialKRF}
    \left\{
    \begin{aligned}
   &\frac{\partial\varphi}{\partial t}\quad= \log\frac{(\omega_0+\ddbar \varphi)^n}{\Omega}+\varphi;\\
&\ \varphi\big|_{t=0}\ =0. 
\end{aligned}
\right.
\end{equation}
Set $u_0:=\frac{\partial\varphi}{\partial t}$. Then we find a smooth function $b(t): [0, \infty) \to \mathbb{R}$ such that
\begin{equation}\label{nov}
\int_X e^{- u_0 - b} \omega(t)^n = (2\pi)^{n},
\end{equation}
then we set $u:=u_0+b$. Perelman proved that (cf. \cite{SeT}) we can find a uniform constant $C>0$ such that
\begin{equation}\label{eqn:Perelman_estimate}
  |u|+|\nabla u|_g+|\Delta_g u|\leq C.  
\end{equation}
Also, the diameter and scalar curvature are uniformly bounded.
Based on these estimates, one can prove that if there is a K\"ahler-Einstein metric or more general K\"ahler-Ricci soliton metric, then the normalized K\"ahler-Ricci flow with converge to the corresponding K\"ahler-Einstein or K\"ahler-Ricci soliton metric. For the proof, see Tian-Zhu \cite{TZh}, \cite{TZh2}, Tian-Zhang-Zhang-Zhu \cite{TZZZ} and Collins-Sz\'ekelyhidi \cite{CoSz}. In 2020, B.Guo, D.H.Phong and J.Sturm \cite{GPhSt} provide a shorter proof of the convergence assuming Perelman's estimates. In their work, Kolodziej's results on uniform H\"older estimates plays an important role. In this note, we use Chen-Cheng's idea \cite{ChCh} of using an auxilliary Monge-Ampere equation and the Alexandrov-Bakelman-Pucci maximum principle to give another derivation of the uniform $L^\infty$ estimate along normalized K\"ahler-Ricci flow. 

\begin{theorem}
Let $(X^n,\omega_g)$ be a Fano manifold of complex dimension $n$ and $[\omega_g]=2\pi c_1(X)$. Assume the K-energy is proper (this is the case if $X$ has no nontrivial holomorphic vector fields and admits a K\"ahler-Einstein metric), let $\varphi(t)$ be solutions to \eqref{eqn:potentialKRF}, then we can find a uniform constant $C$ such that $osc\ \varphi(t)\leq C$.
\end{theorem}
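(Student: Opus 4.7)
The plan is to rewrite the flow equation as a normalized complex Monge-Amp\`ere equation in terms of $\tilde\varphi:=\varphi-\sup_X\varphi$, extract integral control on $\tilde\varphi$ from properness of the K-energy, and then apply the Alexandrov-Bakelman-Pucci maximum principle to an auxiliary Monge-Amp\`ere equation in the spirit of Chen-Cheng.

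Equation \eqref{eqn:potentialKRF} is equivalent to $(\omega_0+\ddbar\varphi)^n=e^{u-b-\varphi}\Omega$, with $|u|\leq C$ by \eqref{eqn:Perelman_estimate}. By \eqref{nov} (combined with $u=u_0+b$) one has $e^{-b}=(2\pi)^n/\int_X e^{-\varphi}\Omega$, and writing the equation in terms of $\tilde\varphi$ (so that $\sup\tilde\varphi=0$ and $\mathrm{osc}\,\varphi=-\inf\tilde\varphi$) the dependence on $\sup\varphi$ cancels, giving
\[
(\omega_0+\ddbar\tilde\varphi)^n\,=\,e^{u}\cdot\frac{(2\pi)^n\,e^{-\tilde\varphi}\,\Omega}{\int_X e^{-\tilde\varphi}\Omega}.
\]
Since $\tilde\varphi\leq 0$ gives $\int_X e^{-\tilde\varphi}\Omega\geq(2\pi)^n$, the RHS admits a pointwise upper bound of the form $C\,e^{-\tilde\varphi}\omega_0^n$.

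Next I would use monotonicity of Perelman's $\mathcal W$-functional along the flow (equivalently, monotonicity of the K-energy along \eqref{eqn:nKRF}) together with properness of the K-energy to obtain a uniform bound on a proper functional such as the Aubin $J$-functional, which translates into $\int_X(-\tilde\varphi)\,\omega_0^n\leq C$. Combined with positivity of Tian's $\alpha$-invariant on Fano manifolds, this upgrades to a uniform upper bound on $\int_X e^{-\tilde\varphi}\Omega$, so the right-hand side of the Monge-Amp\`ere equation above is $e^{G-\tilde\varphi}\omega_0^n$ with $|G|\leq C$ independent of $t$. The main step is then the Chen-Cheng auxiliary Monge-Amp\`ere argument: solve an auxiliary equation of the form $(\omega_0+\ddbar\psi)^n=A^{-1}\tau(\tilde\varphi-\psi-\Lambda)\,\omega_0^n$ with $\sup\psi=0$, where $\tau$ is a suitable positive cutoff and $A$ is the normalization constant, apply the real ABP inequality to $\tilde\varphi-(1+s)\psi$ at its infimum in a Euclidean coordinate chart, use the standard comparison $(\det D^2 u)^{1/n}\leq c_n\,(\omega_0+\ddbar u)^n/\omega_0^n$ valid for $\omega_0$-psh $u$, and integrate the resulting pointwise inequality over the contact set. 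The $L^1$-bound on $\tilde\varphi$ then controls $A$ from below, and unwinding the normalization yields the pointwise estimate $-\inf\tilde\varphi\leq C'$.

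The main obstacle will be the ABP step: the cutoff $\tau$ and parameter $s$ must be chosen so that the ABP integration absorbs the nonlinear coupling between $\psi$ and $\tilde\varphi$ on the RHS of the auxiliary equation, while the normalization constant $A$ is controlled purely by the $L^1$-bound on $\tilde\varphi$ inherited from properness. A subsidiary technical point is verifying the uniform bound on $\int_X e^{-\tilde\varphi}\Omega$ along the entire flow, which may require refining the $\alpha$-invariant estimate using the specific structure of the K\"ahler-Ricci flow. Once these are settled, the argument gives $\mathrm{osc}\,\varphi\leq C$ without any appeal to pluripotential theory, which is precisely the improvement over the Kolodziej-based approach of Guo-Phong-Sturm.
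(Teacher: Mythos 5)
Your reduction of \eqref{eqn:potentialKRF} to the elliptic equation $(\omega_0+\ddbar\tilde\varphi)^n=e^{u}\,(2\pi)^n e^{-\tilde\varphi}\Omega/\int_Xe^{-\tilde\varphi}\Omega$ is correct, and so is the extraction of $\int_X(-\tilde\varphi)\,\omega_0^n\leq C$ from K-energy monotonicity plus properness (this is close in spirit to the paper's Section 3, which bounds the entropy $\int_XFe^F\Omega$ the same way). The gap is in the main analytic step. You are treating the flow equation as a K\"ahler--Einstein-type equation with density $e^{G-\tilde\varphi}$ and running the Guo--Phong--Tong auxiliary equation $(\omega_0+\ddbar\psi)^n=A^{-1}\tau(\tilde\varphi-\psi-\Lambda)\omega_0^n$ plus ABP. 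But that machinery does not close with only an $L^1$ bound on $\tilde\varphi$: after the ABP/comparison step one must run a De Giorgi-type iteration on $\phi(s)=\int_{\{\tilde\varphi<-s\}}e^{G-\tilde\varphi}\omega_0^n$, and producing the superlinear recursion $t\,\phi(s+t)\leq C\phi(s)^{1+\delta}$ requires either $\|e^{G-\tilde\varphi}\|_{L^q(\omega_0^n)}\leq C$ for some $q>1$ or an entropy bound $\int_X|\tilde\varphi|^p\,\omega_{\tilde\varphi}^n\leq C$ with $p>n$. Properness gives only the $p=1$ version, and $\int_Xe^{-q\tilde\varphi}\omega_0^n\leq C$ via the $\alpha$-invariant needs $q<\alpha([\omega_0])$, which is typically $<1$ for Fano manifolds. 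Relatedly, your claimed upgrade ``$L^1$ bound plus positivity of $\alpha$ implies $\int_Xe^{-\tilde\varphi}\Omega\leq C$'' is false: a potential with a logarithmic pole of large coefficient can have small $L^1$ norm and divergent exponential integral. (That particular bound is also unnecessary --- only $G\leq C$, which follows from $\int_Xe^{-\tilde\varphi}\Omega\geq\int_X\Omega$, would be used.) The obstacle you flag at the end is therefore the wrong one, and the real obstruction --- higher integrability of the density $e^{-\tilde\varphi}$ --- is exactly what your scheme cannot supply.

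This is also where your route diverges from the paper's, and why the paper needs a different mechanism. The paper never confronts $e^{-\tilde\varphi}$ as a density; instead it exploits the second equation in \eqref{eqn:system}, $\Delta_{\omega_\varphi}F=-R+\mathrm{tr}_{\omega_\varphi}\omega_0$ with $R$ bounded by Perelman, and applies the Chen--Cheng cscK-type argument: the auxiliary $\psi$ solves a Monge--Amp\`ere equation whose right-hand side is built from the density itself, $e^F\sqrt{1+F^2}$, and the ABP maximum principle is applied to $e^{\delta(F+\epsilon\psi-q\varphi)}$ (times a cutoff) to prove $F+\epsilon\psi-q\varphi\leq C$, hence $\|e^F\|_{L^q}\leq C$ for \emph{every} $q>1$ (Proposition \ref{prop:L2}); only then does a Blocki-style ABP argument give the oscillation bound. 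In other words, the scalar curvature bound is the paper's key input and is precisely the ingredient that manufactures the $L^q$, $q>1$, control your iteration is missing. To repair your proposal you would either have to import Proposition \ref{prop:L2} (and with it the use of $\Delta_{\omega_\varphi}F$), or supply the separate bootstrap that handles the exponential nonlinearity $e^{-\tilde\varphi}$ in the K\"ahler--Einstein setting; as written, the argument does not go through.
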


Given this oscillation bound, it is not difficult to prove the convergence. For reader's convenience, we outline the argument in the end of the paper.

Note that recently B.Guo, D.H.Phong and F.Tong, using ideas of \cite{ChCh}\cite{WWZh}, have developed a systematic way of using auxiliary Monge-Amp\`ere equations to get uniform bounds for a large class of fully nonlinear elliptic equations, see \cite{GPhT} and \cite{GPh}. X.Chen and J. Cheng \cite{ChCh-arx} derived corresponding $L^\infty$ estimates for parabolic Monge-Amp\`ere equations and Hessian equations.\\

In the next section, we shall derive the $L^\infty$ bound assuming an entropy bound. Then in section 3 we prove the entropy bound assuming the existence of K\"ahler-Einstein metrics. This method could be adopted to more general K\"ahler-Ricci soliton case. We leave the details to the readers.\\

{\bf Acknowledgement}: This work arises from a joint project of the authors with Prof. Jian Song of Rutgers University. The authors would like to thank him for suggesting the problem and for helpful discussions. W. Jian is supported by NSFC No.12201610, NSFC No.12288201, National Key R$\&$D Program of China (Grant No.2021YFA1003100).

\section{Application of ABP assuming entropy bounds}

Let $F:=u_0-\varphi$, then \eqref{eqn:potentialKRF} is equivalent to the following system
\begin{equation}\label{eqn:system}
    \left\{
    \begin{aligned}
       (\omega_0+\ddbar\varphi)^n =e^F\Omega;\\
       \Delta_{g(t)}F =-R(t)+tr_{\omega(t)}\omega_0,
    \end{aligned}
    \right.
\end{equation}
where $R(t)$ is the scalar curvature of $g(t)$, which is uniformly bounded according to Perelman's estimates \cite{SeT}. In this section, we shall prove:

\begin{proposition}\label{prop:L2}
For any $q>1$, there is a uniform constant $C>0$, depending only on $n, q, \omega_0$ and $\int_X\log\frac{\omega_\varphi^n}{\Omega}\omega_{\varphi}^n$, such that
$$\|e^F\|_{L^q(X,\Omega)}\leq C.$$
\end{proposition}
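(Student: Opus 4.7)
The plan is to upgrade the entropy hypothesis to a pointwise upper bound on $F$ by a Moser iteration, using the subsolution property of $F$ provided by the flow equation \eqref{eqn:system} together with Perelman's estimates. In fact this strategy gives the stronger assertion $\sup_X F \leq C$, from which the $L^q$ bound $\|e^F\|_{L^q(\Omega)} \leq C'$ is immediate.

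First, by the second equation of \eqref{eqn:system}, together with the Perelman bound $|R(t)|\leq C$ and the pointwise positivity $\mathrm{tr}_{\omega(t)}\omega_0\geq 0$, we have
$$\Delta_{g(t)} F \;=\; -R(t) + \mathrm{tr}_{\omega(t)}\omega_0 \;\geq\; -C,$$
so that $F$ is a weak subsolution, in the time-varying metric $g(t)$, of an elliptic equation with bounded right-hand side. Second, using the elementary inequality $Fe^F\geq -e^{-1}$ and the normalization $\int_X e^F\,\Omega = V := \int_X\omega_0^n$, the entropy hypothesis converts into the $L^1(\omega_\varphi^n)$ bound
$$\int_X F_+ \,\omega_\varphi^n \;=\; \int_X F_+\,e^F\,\Omega \;\leq\; \int_X F\,e^F\,\Omega + e^{-1}V \;\leq\; K + e^{-1}V,$$
where $K$ denotes the entropy.

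The third step is a Moser iteration on the truncations $(F-k)_+$, $k\geq 0$, in the spirit of Gilbarg--Trudinger Theorem 8.17: use the subsolution inequality $\Delta_{g(t)}(F-k)_+\geq -C\chi_{\{F>k\}}$ from step one, multiply by $[(F-k)_+]^{p-1}$, integrate against $\omega(t)^n$, and apply the Sobolev inequality for $(X,\omega(t))$, which is known to have a uniform-in-$t$ constant along the normalized K\"ahler--Ricci flow as a consequence of Perelman's scalar curvature and non-collapsing bounds. The iteration yields $\sup_X F \leq C(n,\omega_0, K)$, hence $\|e^F\|_{L^q(X,\Omega)} \leq e^C V^{1/q}$, as claimed.

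The main obstacle is the uniform-in-$t$ Sobolev inequality for $(X,\omega(t))$, which is the decisive analytic input allowing the Moser iteration to close with constants depending only on $n, \omega_0$, and the entropy. A routine technicality is that the Moser constants degenerate as $p\to 1^+$, so one either starts the iteration at some $p_0>1$ and interpolates with the available $L^1$ bound, or uses a Trudinger-type cutoff; either version terminates and produces the required $L^\infty$ bound on $F$, which is in fact stronger than the $L^q$ statement of the proposition.
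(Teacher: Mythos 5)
Your argument is sound in outline, but it takes a genuinely different route from the paper. The paper never touches Moser iteration or the Sobolev inequality: it introduces Chen--Cheng's auxiliary Monge--Amp\`ere potential $\psi$ solving $\omega_\psi^n=e^F\sqrt{1+F^2}\,\Omega/\!\int_Xe^F\sqrt{1+F^2}\,\Omega$, applies the Alexandrov--Bakelman--Pucci maximum principle to $e^{\delta(F+\epsilon\psi-q\varphi)}$ times a cutoff to obtain $F+\epsilon\psi-q\varphi\le C$ (this is where the entropy and the Perelman bound on $R$ enter), and then converts this into $\int_Xe^{qF}\omega_0^n\le C$ via Tian's $\alpha$-invariant applied to $\varphi$ and $\psi$. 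Your route instead exploits that $\Delta_{g(t)}F=-R+\mathrm{tr}_{\omega(t)}\omega_0\ge -C$ makes $F$ a uniform subsolution for the evolving metric, that the entropy controls $\|F_+\|_{L^1(\omega(t)^n)}$, and that a uniform-in-$t$ Sobolev inequality lets a global Moser iteration close, yielding the stronger conclusion $\sup_XF\le C$ (from which the $L^q$ bound is trivial, and which would even let one bypass the Blocki/ABP step for $\mathrm{osc}\,\varphi$). What you pay for this is the uniform Sobolev inequality along the normalized flow: this is a true and citable theorem (R.~Ye; Q.~S.~Zhang), but it is an additional external input beyond the Perelman estimates \eqref{eqn:Perelman_estimate} listed in the paper, and it somewhat cuts against the paper's stated purpose of showcasing the auxiliary-equation/ABP method. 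The paper's argument, by contrast, needs only Yau's theorem, the $\alpha$-invariant, and the scalar curvature bound.

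One inaccuracy you should fix: the uniform Sobolev constant along the flow is \emph{not} a consequence of ``Perelman's scalar curvature and non-collapsing bounds'' alone --- a volume non-collapsing bound plus a two-sided scalar curvature bound does not imply a Sobolev inequality in general. The correct derivation goes through the monotonicity of Perelman's $\mathcal{W}$-entropy (a uniform logarithmic Sobolev inequality), upgraded to a Sobolev inequality via Davies' heat-kernel argument, with the uniform scalar curvature bound ensuring the constants do not degenerate as $t\to\infty$. With that citation corrected, your Moser iteration (including the standard interpolation to start from $L^1$ data, which you correctly flag) does close and proves the proposition.
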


Given this estimate, we can then follow Blocki \cite{Blo} (see also \cite{PhSoSt} page 334-335 for a simple proof when $q>2$) to  get uniform $L^\infty$ bound of $\text{osc}\ \varphi$ using ABP maximum principle.

To prove Proposition \ref{prop:L2}, following Chen-Cheng \cite{ChCh}, we introduce an intermediate function $\psi$ defined by the following equation:
\begin{equation*}
    \left\{
    \begin{aligned}
       (\omega_0+\ddbar\psi)^n &=\frac{e^F\sqrt{1+F^2}}{\int_X e^F\sqrt{1+F^2}\Omega }\Omega\\
       \sup_X \psi &=0.
    \end{aligned}
    \right.
\end{equation*}
Note that since $F$ depends on $t$, so does $\psi$. By Yau's theorem, $\psi\in C^\infty(X)$.

\begin{lemma}\label{lem:F upper}
For any $q>1$, there is a constant $C>0$, depending on the ``entropy'' $\int_X \log\frac{\omega_\varphi^n}{\Omega}\omega_\varphi^n=\int_X Fe^F\Omega$, and a constant $0<\epsilon<<1$ such that
\begin{equation}
    F+\epsilon\psi-q\varphi\leq C.
\end{equation}
\end{lemma}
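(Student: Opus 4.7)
The plan is to apply the maximum principle to $H := F + \epsilon\psi - q\varphi$ at its maximum point $p_0 \in X$, extract a pointwise bound on $F(p_0)$ via AM--GM applied to the auxiliary Monge--Amp\`ere equation for $\psi$, and close using the normalizations $\sup_X \psi = 0$ and $\inf_X\varphi = 0$. The latter normalization is harmless: replacing $\varphi$ by $\varphi - \inf_X\varphi$ leaves both $\omega_\varphi$ and $F$ unchanged, and the ultimate target is only an oscillation bound on $\varphi$.

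At $p_0$ the matrix $\ddbar H(p_0)$ is nonpositive definite, so $\tr_{\omega_\varphi}(\ddbar H)(p_0) \le 0$. Using the second equation of \eqref{eqn:system}, together with $\ddbar\psi = \omega_\psi - \omega_0$ and $\ddbar\varphi = \omega_\varphi - \omega_0$, this expands into
\[
(1+q-\epsilon)\,\tr_{\omega_\varphi}\omega_0 \;+\; \epsilon\,\tr_{\omega_\varphi}\omega_\psi \;\le\; R(t) + qn \quad \text{at } p_0.
\]
Perelman's estimate \eqref{eqn:Perelman_estimate} bounds $R(t)$ uniformly, so dropping the first nonnegative term on the left yields $\epsilon\,\tr_{\omega_\varphi}\omega_\psi(p_0)\le C_0$. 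The heart of the argument is then the arithmetic--geometric mean inequality, which together with $\omega_\varphi^n = e^F\Omega$ and the defining equation for $\psi$ gives
\[
\tr_{\omega_\varphi}\omega_\psi \;\ge\; n\left(\frac{\omega_\psi^n}{\omega_\varphi^n}\right)^{1/n} = n\left(\frac{\sqrt{1+F^2}}{A}\right)^{1/n}, \quad A := \int_X e^F\sqrt{1+F^2}\,\Omega,
\]
so that $\sqrt{1+F^2(p_0)} \le C_1 \epsilon^{-n} A$ for a constant $C_1$ depending only on $n$, $q$, and Perelman's constant.

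It remains to bound $A$ purely in terms of the entropy $\cE := \int_X F e^F\,\Omega$. For this, $\sqrt{1+F^2} \le 1 + |F|$, and splitting the integral according to the sign of $F$ together with the elementary pointwise bound $(-F)e^F \le e^{-1}$ on $\{F<0\}$ yields $A \le V + \cE + 2V/e$, which is uniform. Hence $F(p_0) \le C_2$, and since $\epsilon\psi(p_0)\le 0$ and $-q\varphi(p_0)\le 0$ by the two normalizations, one gets $H(p_0) \le C_2$, which is the desired bound. The step to watch most carefully is the choice of the auxiliary weight $\sqrt{1+F^2}$: it must grow slowly enough that the entropy (which only controls $\int F e^F$) bounds $A$, yet fast enough that AM--GM genuinely pins down $F(p_0)$; everything else is a direct computation from the KRF identity for $\Delta_{g(t)} F$ together with Perelman's bounds.
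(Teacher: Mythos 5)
Your computation at the maximum point of $H=F+\epsilon\psi-q\varphi$ is correct as far as it goes, and it does pin down $F(p_0)$: the identity $\Delta_{\omega_\varphi}H=-R-qn+(1+q-\epsilon)\,\tr_{\omega_\varphi}\omega_0+\epsilon\,\tr_{\omega_\varphi}\omega_\psi$, the sign condition at the maximum, AM--GM, and the entropy bound on $A=\int_Xe^F\sqrt{1+F^2}\,\Omega$ are exactly the ingredients the paper uses. The gap is in the final step. Knowing $F(p_0)\le C_2$ does not bound $H(p_0)=F(p_0)+\epsilon\psi(p_0)-q\varphi(p_0)$ unless you also control $-q\varphi(p_0)$ from above, i.e.\ unless you have a \emph{lower} bound on $\varphi$ at the uncontrolled point $p_0$. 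Your fix --- renormalizing so that $\inf_X\varphi=0$ --- is not harmless: what you actually prove is $F+\epsilon\psi-q\varphi\le C-q\inf_X\varphi(t)$, and $\inf_X\varphi(t)$ is precisely the quantity not yet known to be bounded below (a priori one only gets $\inf_X\varphi\le C$ and $\sup_X\varphi\ge -C$ from Perelman's bound on $u_0$ and $\int_Xe^F\Omega=\mathrm{const}$). This weaker statement breaks the downstream use of the lemma: the proof of Proposition \ref{prop:L2} needs $\int_Xe^{-q\varphi}\omega_0^n\le C$, and with your normalization this acquires an uncontrolled factor of the form $e^{-cq\inf_X\varphi}$. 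If you instead normalize $\sup_X\varphi=0$ (which the $\alpha$-invariant step could absorb), then $-q\varphi(p_0)\ge 0$ and your maximum-principle step fails. The two requirements are incompatible unless one already knows $\mathrm{osc}\,\varphi\le C$, which is the theorem being proved.

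This is exactly why the paper does not apply the classical maximum principle to $H$ itself. Following Chen--Cheng, it works with $Q=e^{\delta H}$ times a cutoff supported near $p_0$ and applies the ABP estimate on a geodesic ball. The ABP inequality trades the pointwise value $-q\varphi(p_0)$ for the integral $\bigl(\int_Xe^{2n\delta(\epsilon\psi-q\varphi)}\omega_0^n\bigr)^{1/2n}\le\bigl(\int_Xe^{-2qn\delta\varphi}\omega_0^n\bigr)^{1/2n}$, which Tian's $\alpha$-invariant (together with $\sup_X\varphi\ge -C$) controls once $\delta=\frac{1}{4qn}\alpha([\omega_0])$. Your observation that on the bad set $\{-R-qn+\epsilon n(F^2+1)^{1/2n}A^{-1/n}\le 0\}$ one has $|F|\le C(\mathcal{E})$ coincides with the paper's and is where the entropy enters; but without the ABP step converting the pointwise $-q\varphi$ into an $\alpha$-invariant--controlled integral, the argument does not close.
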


\begin{proof}
We fix a $t>0$ and all the discussions below are at time $t$.

Consider the function $Q:=e^{\delta(F+\epsilon\psi-q\varphi)}$, where $0<\delta<<1$ is a constant, to be chosen later. Assume $Q$ achieves its maximum (on $X$) at $p_0\in X$. 

We shall use the following ``cut-off'' function. Pick $\theta\in (0,1)$, to be determined later, and choose $d_0>0$, which is comparable to the injectivity radius of $g_0$, then we can find a smooth function $f$ satisfying: 
\begin{itemize}
    \item $1-\theta\leq f\leq 1$;
    \item $f(p_0)=1$;
    \item $f\equiv 1-\theta$ on $X\setminus B_{\frac{d_0}{2}}(p_0)$, where $B_{\frac{d_0}{2}}(p_0)$ is a geodesic ball with respect to $g_0$;
    \item $|\nabla f|^2_{g_0}\leq \frac{4\theta^2}{d_0^2}$;
    \item $|\nabla^2 f|_{g_0}\leq\frac{4\theta}{d_0^2}$.
\end{itemize}
Then we have
\begin{align*}
    \Delta_{\omega_\varphi}(Qf)&= f\Delta_{\omega_\varphi} Q+2\langle \nabla f,\nabla Q\rangle_{\omega_\varphi} +Q\Delta_{\omega_\varphi}f\\
    &=\delta fe^{\delta(F+\epsilon\psi-q\varphi)}\Delta_{\omega_\varphi} \big(F+\epsilon\psi-q\varphi\big)+e^{\delta(F+\epsilon\psi-q\varphi)}\Delta_{\omega_\varphi}f\\
    &\quad+\delta^2 fe^{\delta(F+\epsilon\psi-q\varphi)}|\nabla(F+\epsilon\psi-q\varphi)|_{\omega_\varphi}^2\\
    &\quad+2\delta e^{\delta(F+\epsilon\psi-q\varphi)}\langle \nabla f,\nabla (F+\epsilon\psi-q\varphi)\rangle_{\omega_\varphi}.
\end{align*}
For the first term, we have
\begin{align*}
\Delta_{\omega_\varphi} \big(F+\epsilon\psi-q\varphi\big)&=-R+tr_{\omega_\varphi}\omega_0-qn+qtr_{\omega_\varphi}\omega_0+\epsilon \Delta_{\omega_\varphi}\psi \\
&\geq -R-qn +(q+1-\epsilon)tr_{\omega_\varphi}\omega_0+\epsilon n\Big(\frac{\omega^n_\psi}{\omega^n_\varphi}\Big)^{\frac{1}{n}}\\
&=-R-qn +(q+1-\epsilon)tr_{\omega_\varphi}\omega_0+\epsilon n\frac{(F^2+1)^{\frac{1}{2n}}}{\big(\int_X e^F\sqrt{1+F^2}\Omega \big)^{\frac{1}{n}}}.
\end{align*}
For the second term, we have
\begin{align*}
 e^{\delta(F+\epsilon\psi-q\varphi)}\Delta_{\omega_\varphi}f &\geq -e^{\delta(F+\epsilon\psi-q\varphi)}|\nabla^2 f|_{g_0} tr_{\omega_\varphi}\omega_0 \\
 &\geq -e^{\delta(F+\epsilon\psi-q\varphi)}\frac{4\theta}{d_0^2}tr_{\omega_\varphi}\omega_0 
 \end{align*}
On the other hand, we have
\begin{align*}
  2\delta \langle \nabla f,\nabla (F+\epsilon\psi-q\varphi)\rangle_{\omega_\varphi} &\geq -\delta^2 f|\nabla(F+\epsilon\psi-q\varphi)|_{\omega_\varphi}^2 -\frac{|\nabla f|_{\omega_\varphi}^2}{f}\\
  &\geq -\delta^2 f|\nabla(F+\epsilon\psi-q\varphi)|_{\omega_\varphi}^2 -\frac{|\nabla f|_{g_0}^2}{1-\theta}tr_{\omega_\varphi}\omega_0\\
  &\geq -\delta^2 f|\nabla(F+\epsilon\psi-q\varphi)|_{\omega_\varphi}^2 -\frac{4\theta^2 }{d_0^2(1-\theta)}tr_{\omega_\varphi}\omega_0.
\end{align*}
Adding together, we get
\begin{align*}
    \Delta_{\omega_\varphi}(Qf)&\geq \delta fe^{\delta(F+\epsilon\psi-q\varphi)} \Big(-R-qn+\epsilon n\frac{(F^2+1)^{\frac{1}{2n}}}{\big(\int_X e^F\sqrt{1+F^2}\Omega \big)^{\frac{1}{n}}}\Big)\\
    &\quad +e^{\delta(F+\epsilon\psi-q\varphi)}\Big(\delta f(q+1-\epsilon)-\frac{4\theta}{d_0^2}-\frac{4\theta^2 }{d_0^2(1-\theta)}\Big)tr_{\omega_\varphi}\omega_0.
\end{align*}
When $\theta$ is chosen to be small enough (depending on $d_0,\delta,\epsilon$ only), we get
\begin{equation}\label{eqn:ineq}
  \Delta_{\omega_\varphi}\Big(e^{\delta(F+\epsilon\psi-q\varphi)}f\Big)\geq  \delta fe^{\delta(F+\epsilon\psi-q\varphi)} \Big(-R-qn+\epsilon n\frac{(F^2+1)^{\frac{1}{2n}}}{\big(\int_X e^F\sqrt{1+F^2}\Omega \big)^{\frac{1}{n}}}\Big). 
\end{equation}
Now we apply the ABP maximum principle to $Qf$ on the geodesic ball $B_{d_0}(p_0)$ (with respect to $g_0$):
\begin{align*}
    \max_X Q &=\max_X(Qf)\\
    &\leq \sup_{\partial B_{d_0}(p_0)}(Qf)+C(n,\omega_0) d_0\left\{\int_{B_{d_0}(p_0)}\frac{(\delta Qf)^{2n}\Big(-R-qn+\epsilon n\frac{(F^2+1)^{\frac{1}{2n}}}{\big(\int_X e^F\sqrt{1+F^2}\Omega \big)^{\frac{1}{n}}}\Big)_{-}^{2n}}{e^{-2F}}\omega_0^n\right\}^{\frac{1}{2n}}.
\end{align*}
Notice that when $$-R-qn+\epsilon n\frac{(F^2+1)^{\frac{1}{2n}}}{\big(\int_X e^F\sqrt{1+F^2}\Omega \big)^{\frac{1}{n}}}\leq 0,$$ we will get
\begin{align*}
|F|&<\sqrt{F^2+1}\leq C(q,n,\epsilon,\omega_0)\int_X e^F\sqrt{F^2+1}\Omega\\
&\leq C(q,n,\epsilon,\omega_0)\Big([\omega_0]^n+\int_X |F|e^F\Omega\Big)\\
&\leq C(q,n,\epsilon,\omega_0)\Big([\omega_0]^n+\int_X Fe^F\Omega+2\int_{\{F<0\}}(-F)e^F\omega_0^n\Big)\\
&\leq C(q,n,\epsilon,\omega_0)\Big(2[\omega_0]^n+\int_X Fe^F\Omega\Big).
\end{align*}
So we obtain
\begin{align*}
    \max_X Q &\leq (1-\theta)\max_X Q +C\delta d_0\Big[\int_X e^{2n\delta(\epsilon\psi-q\varphi)}\omega_0^n\Big]^{\frac{1}{2n}}\\
    &\leq (1-\theta)\max_X Q +C\delta d_0\Big[\int_X e^{-2qn\delta\varphi}\omega_0^n\Big]^{\frac{1}{2n}}.
\end{align*}
Using Tian's $\alpha$-invariant, take $\delta:=\frac{1}{4qn}\alpha([\omega_0])$, we get uniform upper bound of $Q$ and hence of $F+\epsilon\psi-q\varphi$.
\end{proof}

\begin{proof}[Proof of Proposition \ref{prop:L2}]
From $F+\epsilon\psi-q\varphi\leq C$ we get
$$-(q+1)\varphi\leq -u-\epsilon\psi+C\leq -\epsilon\psi+C'.$$
Then we have
\begin{align*}
    \int_X e^{qF}\omega_0^n &\leq C\int_X e^{-q\varphi}\omega_0^n\\
    &\leq \tilde C\int_X e^{-\frac{q\epsilon}{q+1}\psi}\omega_0^n\leq\tilde{\tilde C},
\end{align*}
when we choose $\epsilon\leq \alpha([\omega_0])$.
\end{proof}

\section{Entropy bound, $L^\infty$ estimate and convergence}

Now we assume that $X$ admits a K\"ahler-Einstein metric and does not have any non-trivial holomorphic vector fields. Then it is well-known that the K-energy is proper on the space of K\"ahler potentials, i.e. there are constants $\mu>0, C>0$ such that
\begin{equation}\label{eqn:proper}
    K_{\omega_0}(\psi)\geq \mu (I_{\omega_0}-J_{\omega_0})(\psi)-C,\quad \forall \psi\in \mathcal{H}(X,\omega_0).
\end{equation}

\begin{lemma}
Under the above assumption, along the normalized K\"ahler-Ricci flow, we have
$$\int_X Fe^F\Omega\leq C.$$
\end{lemma}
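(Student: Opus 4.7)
The plan is to use the monotonicity of the Mabuchi K-energy along the normalized K\"ahler-Ricci flow, combined with the properness assumption \eqref{eqn:proper}.

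First, a standard calculation shows that along \eqref{eqn:potentialKRF} the K-energy is monotone non-increasing:
$$\frac{d}{dt}K_{\omega_0}(\varphi(t)) = -\frac{1}{V}\int_X |\nabla \partial_t\varphi|^2_{\omega_\varphi}\,\omega_\varphi^n \leq 0,$$
where $V := \int_X\omega_0^n$. This follows from Mabuchi's formula $\frac{d}{dt}K_{\omega_0}(\varphi_t) = -\frac{1}{V}\int_X \partial_t\varphi_t(R_{\varphi_t}-n)\omega_{\varphi_t}^n$ and the identity $R_\varphi - n = -\Delta_{\omega_\varphi}(\partial_t\varphi)$, obtained by applying $-\ddbar$ to \eqref{eqn:potentialKRF}. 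Normalizing $K_{\omega_0}(0) = 0$, we conclude $K_{\omega_0}(\varphi(t)) \leq 0$ for all $t \geq 0$. The properness hypothesis \eqref{eqn:proper} then forces
$$(I_{\omega_0}-J_{\omega_0})(\varphi(t)) \leq \frac{1}{\mu}\bigl(K_{\omega_0}(\varphi(t)) + C\bigr) \leq \frac{C}{\mu}.$$

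Second, I would invoke the Chen-Tian explicit decomposition of the K-energy. Using $\omega_0 = -\ddbar\log\Omega$ and letting $h_0$ denote the (smooth, bounded) Ricci potential of $\omega_0$, a computation yields
$$K_{\omega_0}(\varphi) = \frac{1}{V}\int_X F e^F\,\Omega - (I_{\omega_0}-J_{\omega_0})(\varphi) + \frac{1}{V}\mathcal{B}(\varphi),$$
where $\mathcal{B}(\varphi) := \int_X \log(\Omega/\omega_0^n)\,\omega_\varphi^n - \int_X h_0(\omega_\varphi^n - \omega_0^n)$. Since $\log(\Omega/\omega_0^n)$ and $h_0$ are fixed bounded functions on $X$ and $\int_X\omega_\varphi^n = V$, the term $\mathcal{B}(\varphi)$ is uniformly bounded. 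Rearranging,
$$\int_X F e^F \Omega = V\cdot K_{\omega_0}(\varphi) + V\cdot(I_{\omega_0}-J_{\omega_0})(\varphi) - \mathcal{B}(\varphi),$$
and combining with the two bounds from the previous paragraph gives the desired $\int_X F e^F\Omega \leq C$.

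There is no real conceptual obstacle; the main delicate point is carefully bookkeeping the signs and normalization constants in the Chen-Tian formula (one could equivalently work with the Ding functional, whose monotonicity along \eqref{eqn:potentialKRF} is also classical). The key inputs are that $\omega_0$, $\Omega$, and $h_0$ are fixed and bounded (so that the $\mathcal{B}$ remainder is bounded), that the K-energy is monotone non-increasing along the flow, and that properness converts this into a uniform bound on $(I_{\omega_0}-J_{\omega_0})(\varphi(t))$.
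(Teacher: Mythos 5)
Your proposal is correct and follows essentially the same route as the paper: monotonicity of the K-energy along the flow, the explicit decomposition of $K_{\omega_0}(\varphi)$ into the entropy term $\int_X Fe^F\Omega$ minus $(I_{\omega_0}-J_{\omega_0})(\varphi)$ plus bounded terms, and properness to control $(I_{\omega_0}-J_{\omega_0})$. (Incidentally, your formula $\frac{d}{dt}K_{\omega_0}=-\frac{1}{V}\int_X|\nabla\dot\varphi|^2_{\omega_\varphi}\omega_\varphi^n$ is the correct one; the paper's displayed $-\int_X|\dot\varphi|^2\frac{\omega_\varphi^n}{n!}$ is a typo, though both give the needed monotonicity.)
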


\begin{proof}

Recall that
\begin{align*}
    \frac{d}{dt}K_{\omega_0}(\varphi)&=-\int_X \frac{\partial\varphi}{\partial t}(Ric(\omega_\varphi)-\omega_\varphi)\wedge \frac{\omega_\varphi^{n-1}}{(n-1)!}\\
    &=\int_X \frac{\partial\varphi}{\partial t}\ddbar\Big(\varphi+\log\frac{\omega_\varphi^n}{\Omega}\Big)\wedge \frac{\omega_\varphi^{n-1}}{(n-1)!}\\
    &= \int_X \frac{\partial\varphi}{\partial t}\ddbar\frac{\partial\varphi}{\partial t}\wedge \frac{\omega_\varphi^{n-1}}{(n-1)!}\\
    &=-\int_X\Big|\frac{\partial\varphi}{\partial t}\Big|^2\frac{\omega_\varphi^{n}}{n!}\leq 0.
\end{align*}
So in this case $K_{\omega_0}(\varphi)$ is uniformly bounded along the normalized K\"ahler-Ricci flow. On the other hand, we have 
\begin{align*}
    K_{\omega_0}(\varphi)&=\int_0^1\int_X \varphi\Big(\omega_{\tau\varphi}-Ric(\omega_{\tau\varphi})\Big)\wedge\frac{\omega_{\tau\varphi}^{n-1}}{(n-1)!}d\tau\\
    &=\int_0^1\int_X \Big(\tau\varphi+\log\frac{\omega_{\tau\varphi}^n}{\Omega}\Big)\ddbar\varphi\wedge\frac{\omega_{\tau\varphi}^{n-1}}{(n-1)!}d\tau\\
    &= \int_X \Big(\varphi+\log\frac{\omega_{\varphi}^n}{\Omega}\Big)\frac{\omega_{\varphi}^n}{n!}-\int_X \log\frac{\omega_0^n}{\Omega}\frac{\omega_0^n}{n!}-\int_0^1\int_X \varphi\frac{\omega_{\tau\varphi}^n}{n!} d\tau \\
    &=\int_X Fe^F\frac{\Omega}{n!}-C+\int_0^1\int_X \tau\varphi\ddbar\varphi\wedge\frac{\omega_{\tau\varphi}^{n-1}}{(n-1)!}d\tau\\
    &=\int_X Fe^F\frac{\Omega}{n!}-C+\int_0^1\int_X \varphi(\omega_{\tau\varphi}-\omega_0)\wedge\frac{\omega_{\tau\varphi}^{n-1}}{(n-1)!}d\tau\\
    &=\int_X Fe^F\frac{\Omega}{n!}-C-J_{\omega_0,\omega_0}(\varphi)\\
    &=\int_X Fe^F\frac{\Omega}{n!}-C-(I_{\omega_0}-J_{\omega_0})(\varphi).
\end{align*}
So we get
$$\int_X Fe^F\frac{\Omega}{n!}\leq C+K_{\omega_0}(\varphi)+(I_{\omega_0}-J_{\omega_0})(\varphi)\leq (1+\frac{1}{\mu})K_{\omega_0}(\varphi)+C'\leq C''.$$
\end{proof}

Combining with Proposition \ref{prop:L2}, we get uniform estimates of $osc\ \varphi(t)$.\\

It is well known that given this oscillation bound, we can bound $|\varphi(t)|_{L^\infty}$ after suitable normalization, see, for example \cite{PSSW}. Indeed, if we set 
$\alpha(t):=\frac{1}{V}\int_X \dot\varphi \omega(t)^n$, then we have
$$\dot\alpha=\alpha-\frac{1}{V}\int_X |\nabla\dot\varphi|^2 \omega(t)^n=\alpha-\frac{1}{V}\int_X |\nabla u|^2 \omega(t)^n.$$
Following Chen-Tian\cite{ChT} and Phong-Song-Sturm-Weinkove \cite{PSSW}, by adding a constant to $\varphi(0)=0$, we can make $\alpha$ uniformly bounded. In fact, we have
$$\alpha(t)=e^t\Big(\alpha(0)-\int_0^t e^{-s}\big[\fint_X |\nabla u|^2(s) \big]ds\Big).$$
In general, $\alpha(t)$ may not be uniformly bounded. We need to replace the initial condition in \eqref{eqn:potentialKRF} to $\varphi(0)=c_0$ for a constant $c_0$, determined in the following way:
Since $|\nabla u|$ is bounded by Perelman, $\int_0^\infty e^{-s}\big[\fint_X |\nabla u|^2(s) \big]ds$ converges. To make $\alpha(t)$ uniformly bounded, we need to ensure $$\alpha(0)=\int_0^\infty e^{-s}\big[\fint_X |\nabla u|^2(s) \big]ds.$$
Note that by \eqref{eqn:potentialKRF}, we have
$$\alpha(0)=\fint_X \dot\varphi|_{t=0}=\fint_X\log\frac{\omega_0^n}{\Omega}+\fint \varphi(0).$$
Then we can set the initial condition to $$\varphi(0)=c_0:=\int_0^\infty e^{-s}\big[\fint_X |\nabla u|^2(s) \big]ds-\fint_X\log\frac{\omega_0^n}{\Omega}.$$
Note that by uniqueness of the solutions to the K\"ahler-Ricci flow, the new $\varphi(t)$ differs from the old one by a constant depending only on $t$.

Under this re-choice of intial potential, since $u$ is uniformly bounded and $\dot\varphi$ differs from $u$ by a time-dependent constant, we have a uniform bound for $osc\ \dot\varphi$. Since the integration average of $\dot\varphi$ is uniformly bounded, we conclude that $\dot\varphi$ is also uniformly bounded. However, we have $F=\dot\varphi-\varphi$ and $\int e^F\Omega=const$, the oscillation bound $osc\ \varphi\leq C$ is equivalent to $L^\infty$ bound for $\varphi$. Now both $\dot\varphi$ and $\varphi$ are uniformly bounded, so is $F$.\\

Now we can bound $\tr_{\omega_0}\omega(t)$ using the parabolic maximum principle as follows: standard computation gives us
$$(\partial_t-\Delta_{\omega(t)})\log tr_{\omega_0}\omega(t)\leq 1+C tr_{\omega(t)}\omega_0,$$
where $C$ depends only on the bisectional curvature of $\omega_0$. Since $(\partial_t-\Delta_{\omega(t)})F=n-tr_{\omega(t)}\omega_0$, we have
$$(\partial_t-\Delta_{\omega(t)})[\log tr_{\omega_0}\omega(t)+(C+1)F]\leq 1+n(C+1)- tr_{\omega(t)}\omega_0.$$
For any $T>0$, if the maximum of $Q:=\log tr_{\omega_0}\omega(t)+(C+1)F$ is achieved at $(p,t)$, write the eigenvalues of $\omega(t)$ with respect to $\omega_0$ as $\lambda_1,\dots,\lambda_n$, then at $(p,t)$, we have
$$\frac{1}{\lambda_{min}}<tr_{\omega(t)}\omega_0\leq 1+n(C+1)=:C_1.$$
Now by the equation and bounds for $F$, we have $\lambda_1\dots\lambda_n=\frac{\omega(t)^n}{\omega_0^n}\leq C_2$, this implies
$$\lambda_{max}\leq \frac{C_2}{\lambda_{min}^{n-1}}\leq C_1^{n-1}C_2=:C_3.$$
From this, we get a uniform bound for $Q$ independent of $T$ and hence a uniform bound for $tr_{\omega_0}\omega(t)$, which in turn implies uniform equivalence of $\omega(t)$ and $\omega_0$.

Once we have uniform $C^2$ estimates, one can prove the higher order estimates and convergence of the flow by standard arguments in parabolic theory, see, for example \cite{ChT} and \cite{PSSW}.

\bibliographystyle{alpha}

\vspace{.5cm}

\noindent Wangjian Jian, Institute of Mathematics, Academy of Mathematics and Systems Science, Chinese Academy of Sciences, Beijing, 100190, China.\\
{\em Email: wangjian@amss.ac.cn}\\

\noindent Yalong Shi, Department of Mathematics, Nanjing University, Nanjing, 210093, China.\\
{\em Email: shiyl@nju.edu.cn}

\end{document}